\newtheorem{theorem}{Theorem}[section]
\newtheorem{lemma}[theorem]{Lemma}
\newtheorem{corollary}[theorem]{Corollary}
\newtheorem{conjecture}[theorem]{Conjecture}
\theoremstyle{plain}
\newtheorem{remark}[theorem]{Remark}
\newtheorem{question}[theorem]{Question}
\theoremstyle{definition}
\theoremstyle{remark}
\numberwithin{equation}{section}
\begin{document}

\title[]{On the image of the Abel-Jacobi map}

\author{Sen Yang}
\address{Shing-Tung Yau Center of Southeast University \\ 
Southeast University \\
Nanjing, China\\
}
\address{School of Mathematics \\  Southeast University \\
Nanjing, China\\
}
\address{Yau Mathematical Sciences Center \\
Tsinghua University \\
Beijing, China\\
}
\email{101012424@seu.edu.cn; syang@math.tsinghua.edu.cn}

\subjclass[2010]{14C25}
\date{}

\maketitle

\begin{abstract}
We prove and generalize an observation of Green and Griffiths on the infinitesimal form of the Abel-Jacobi map. As an application, we prove that the infinitesimal form of a conjecture by Griffiths and Harris \cite{GH} is true.
\end{abstract}

\tableofcontents

\section{Introduction}
\label{Introduction}

In \cite{GH}, Griffiths and Harris conjectured that 
\begin{conjecture}[\cite{GH}] \label{conjecture: GH}
Let $X \subset \mathbb{P}_{\mathbb{C}}^{4}$ be a general hypersurface of degree $d \geq 6$, we use 
\[
\psi: CH_{hom}^{2}(X) \to J^{2}(X)
\]
to denote the Abel-Jacobi map from algebraic 1-cycles on $X$ homologically equivalent to zero to the intermediate Jacobian $J^{2}(X)$, $\psi$ is zero.
\end{conjecture}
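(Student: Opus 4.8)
The plan is to deduce the global vanishing of $\psi$ from its infinitesimal counterpart by a spreading-out argument coupled with a monodromy/rigidity input, rather than by attacking $\psi$ directly on the fixed threefold $X$. In outline: spread an arbitrary cycle class over the universal family to produce a normal function, use the infinitesimal form of the conjecture to see that this normal function has identically vanishing infinitesimal invariant, and then invoke the largeness of the monodromy of the family to force the normal function to be (at worst) torsion; the final and hardest reduction is from torsion to literal vanishing.

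Concretely, I would first fix the setup. Let $B \subset \mathbb{P}(H^0(\mathbb{P}^4, \mathcal{O}(d)))$ be the Zariski-open locus of smooth hypersurfaces, $\pi : \mathcal{X} \to B$ the universal family, and $\mathcal{H} = R^3\pi_* \mathbb{Z}$ the underlying weight-$3$ variation of Hodge structure; since $K_X = \mathcal{O}_X(d-5)$, one has $h^{3,0}(X) = \binom{d-1}{4} > 0$ precisely because $d \geq 6$, and this positivity is the numerical input that makes the infinitesimal method effective. Given $Z \in CH^2_{\mathrm{hom}}(X)$ with $X = X_{b_0}$, I would spread $Z$ out: after shrinking $B$ to a dense open set and passing to a connected generically finite cover $T \to B$ (to absorb the monodromy of the support of the cycle), $Z$ extends to a relative $1$-cycle $\mathcal{Z}$ on $\mathcal{X}_T$ that is fiberwise homologous to zero. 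The fiberwise Abel--Jacobi images then assemble into a normal function $\nu_{\mathcal{Z}} \in H^0(T, \mathcal{J})$, where $\mathcal{J} = \mathcal{H}_{\mathcal{O}}/(F^2\mathcal{H}_{\mathcal{O}} + \mathcal{H}_{\mathbb{Z}})$ is the intermediate Jacobian bundle and $\nu_{\mathcal{Z}}(t) = \psi_{X_t}(\mathcal{Z}_t)$ for all $t \in T$.

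Next I would identify the derivative of $\nu_{\mathcal{Z}}$ with the infinitesimal Abel--Jacobi map. By the infinitesimal form of the conjecture (the vanishing of the induced infinitesimal map, which is what is established here), this derivative is zero at the very general point; by density of very general points and the algebraicity of the construction, Green's infinitesimal invariant $\delta\nu_{\mathcal{Z}}$ then vanishes identically on $T$. The rigidity step is to upgrade this to global triviality. Because the monodromy representation of $\pi_1(T)$ on $H^3(X_t,\mathbb{Q}) = H^3_{\mathrm{prim}}(X_t)$ is irreducible and nontrivial (the monodromy of the universal family of hypersurfaces is as large as the polarization allows, so $H^3(X_t)$ carries no nonzero $\pi_1$-invariants and $\mathcal{H}$ has trivial fixed part), a normal function whose infinitesimal invariant vanishes identically and whose underlying variation has no fixed part must be a torsion normal function. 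Specializing to $t = b_0$ would then give that $\psi(Z)$ is a torsion point of $J^2(X)$.

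The hard part, and the genuine gap between this argument and the literal statement $\psi = 0$, lies entirely in the last bridge, and it splits into two difficulties. First, the vanishing of $\delta\nu_{\mathcal{Z}}$ controls only the derivative of $\nu_{\mathcal{Z}}$ along $T$; recovering triviality of the section itself requires, beyond the fixed-part input, a careful study of the degeneration of $\mathcal{H}$ over the boundary of a smooth compactification $\bar T$, so that $\nu_{\mathcal{Z}}$ extends with controlled singularities and its class in $H^1(\bar T, j_*\mathcal{H}_{\mathbb{Z}})$ can be shown to be torsion. Second, even granting this, the monodromy argument delivers only that the image of $\psi$ is torsion, whereas the conjecture asserts that $\psi$ is identically zero; closing this requires either showing that the image of $\psi$ contains no nontrivial torsion for very general $X$ (a Roitman/Bloch-type statement in codimension two) or sharpening the infinitesimal computation to an integral one. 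I would therefore treat the infinitesimal vanishing as the decisive conceptual step and expect the monodromy-extension analysis together with the torsion-to-zero refinement to be where the real work remains.
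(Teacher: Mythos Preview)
The statement you are attempting to prove is the Griffiths--Harris conjecture itself, and the paper does \emph{not} prove it; it is presented as an open conjecture, and the paper's contribution is the infinitesimal version (Theorem~\ref{Theorem: main}) together with the identification of the infinitesimal Abel--Jacobi map (Corollary~\ref{corollary: inf'l AJ}). So there is no ``paper's own proof'' to compare against.

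Your strategy --- spread out the cycle, form a normal function, kill its infinitesimal invariant, and invoke irreducibility of the monodromy of the primitive cohomology to force the normal function to be torsion --- is exactly the Green--Voisin argument cited in the paper as Theorem~1.2. You correctly recognize that this yields only that the image of $\psi$ is contained in the torsion of $J^2(X)$, not that $\psi=0$. What you should be aware of is that this torsion conclusion is not a partial proof of the conjecture with a small gap remaining: it is a known theorem, and the passage from ``torsion'' to ``zero'' is the entire open content of Conjecture~\ref{conjecture: GH}. Your two proposed avenues for closing it (a Roitman-type statement in codimension two, or an integral refinement of the infinitesimal computation) are reasonable to name, but neither is available in the literature, and no general mechanism is known that promotes vanishing of the infinitesimal invariant plus large monodromy to literal vanishing of $\psi$ rather than torsion. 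In short, your proposal is a correct outline of the known torsion theorem, honestly flagged as incomplete, but it does not constitute a proof of the conjecture, and neither does the paper.
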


There are many known results showing that the image of the Abel-Jacobi map is torsion, or even that a Chow group is torsion. We give an illustration of the use of tangent spaces to Chow groups to prove infinitesimal versions of these results. For example, Green and Voisin studied this conjecture and showed that
\begin{theorem} [Theorem 0.1 of \cite{Green-JDG} \footnote{In fact, Green
 \cite{Green-JDG} proved analogous results in all dimension.}, 1.5 of \cite{Voisin}]
For $X \subset \mathbb{P}_{\mathbb{C}}^{4}$ a general hypersurface of degree $d \geq 6$, 
the image of the Abel-Jacobi map on algebraic 1-cycles on $X$ homologically equivalent to zero has image contained in the torsion points of the intermediate Jacobian.
\end{theorem}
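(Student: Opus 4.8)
The proof I would give is the one due to Green and Voisin: realize the Abel--Jacobi image as a value of a normal function and kill Griffiths' infinitesimal invariant of that normal function. Concretely the plan has three steps: (i) spread a homologically trivial $1$-cycle on a general $X$ out into a family over a base dominating the space of hypersurfaces, producing a normal function $\nu$ with $\nu(s_0)=\psi(Z)$; (ii) identify the Griffiths infinitesimal invariant $\delta\nu$ with a class in a Koszul complex built from the Jacobian ring of $X$, and show that class vanishes for $d\ge 6$; (iii) deduce from $\delta\nu\equiv 0$ that $\nu$ is flat, hence torsion by a monodromy argument.

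For step (i), let $U\subset\mathbb{P}\big(H^0(\mathbb{P}^4_{\mathbb{C}},\mathcal{O}(d))\big)$ be the open locus of smooth hypersurfaces and $\mathcal{X}\to U$ the universal family. If $Z$ is a $1$-cycle on a general $X=\mathcal{X}_{u_0}$ with $[Z]=0$ in $H^4(X,\mathbb{Z})$, then, $X$ being general, spreading $Z$ out over the appropriate component of a relative Chow or Hilbert scheme yields a smooth variety $S$ with a dominant morphism $\pi:S\to U$, a point $s_0$ over $u_0$, and a relative codimension-$2$ cycle on $\mathcal{X}\times_U S$, fibrewise homologically trivial, specializing to $Z$ at $s_0$. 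This defines a holomorphic normal function $\nu$ on $S$ with values in the family of intermediate Jacobians $J^2$ of the fibres, with $\nu(s_0)=\psi(Z)$. One works with the primitive variation $\mathcal{H}=R^3\pi_*\mathbb{C}$, which for hypersurfaces in $\mathbb{P}^4$ is all of $R^3\pi_*\mathbb{C}$, and it suffices (since $X$ is general) to show every such $\nu$ is a torsion section.

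For step (ii), the technical core: by the theory of Griffiths and Green, $\delta\nu$ lives in the first hypercohomology of a complex assembled from $\mathcal{H}$, its Hodge filtration, and the Gauss--Manin connection (with $F^2\mathcal{H}$ in degree $0$), and fibrewise it is computed by iterated cup product with the Kodaira--Spencer class $T_S\to H^1(X,T_X)$. For a smooth hypersurface $X=\{f=0\}$ this becomes an explicit computation with the Jacobian ring $R=\mathbb{C}[x_0,\dots,x_4]/J_f$: Griffiths' residue isomorphisms give $H^{3-a,a}_{\mathrm{prim}}(X)\cong R^{(a+1)d-5}$, the Kodaira--Spencer map surjects onto $R^{d}$, cup product becomes multiplication $R^{\bullet}\otimes R^{d}\to R^{\bullet+d}$, and Macaulay's perfect pairing $R^{a}\times R^{\sigma-a}\to R^{\sigma}\cong\mathbb{C}$ (socle degree $\sigma=5(d-2)$) exhibits the relevant complex as a strand of a Koszul complex on $R$. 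Green's vanishing theorem for the Koszul cohomology of the Jacobian ring then forces the group containing $\delta\nu$ to be zero once the socle degree $5(d-2)$ is large enough, which is exactly the hypothesis $d\ge 6$. I expect this Koszul vanishing to be the main obstacle; it is also precisely where the borderline case $d=5$ of the quintic threefold drops out, for which the conclusion is known to fail. Since the computation applies at every smooth fibre, $\delta\nu\equiv 0$ on $S$.

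For step (iii): vanishing of $\delta\nu$, together with the vanishing of the accompanying higher obstructions supplied by the same range of Koszul vanishings, lets one correct a local holomorphic lift of $\nu$ by sections of $F^2\mathcal{H}$ into a $\nabla$-flat section; thus $\nu$ is, locally on $S$, a flat section of the torus bundle $\mathcal{H}_{\mathbb{C}}/\mathcal{H}_{\mathbb{Z}}$, i.e. it is invariant under the monodromy of the family. But the monodromy acts on $H^3_{\mathrm{prim}}(X,\mathbb{Q})$ irreducibly and nontrivially --- the discriminant is irreducible and the vanishing cycles form a single nonempty orbit, by Lefschetz theory --- so there are no nonzero rational invariants, and a monodromy-invariant section of $\mathcal{H}_{\mathbb{C}}/\mathcal{H}_{\mathbb{Z}}$ without a rational lift is torsion. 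Hence $\nu$ is a torsion section and $\psi(Z)=\nu(s_0)$ lies in the torsion of $J^2(X)$. The remaining delicate point, which is the content of Voisin's refinement, is that the Kodaira--Spencer map enters only through its surjection onto $R^d$, so one must verify that the argument is unaffected by deformation directions of $S$ not coming from moving $f$.
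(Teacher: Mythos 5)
This statement is not proved in the paper at all: it is quoted as a known result of Green (Theorem 0.1 of \cite{Green-JDG}) and Voisin (\cite{Voisin}, 1.5), and the paper's own contribution is only the \emph{infinitesimal} analogue (Theorem \ref{Theorem: main}), proved by a short exact sequence of K\"ahler differentials and a Jacobian-ring surjectivity. So there is no in-paper argument to compare yours against; what you have written is, correctly, an outline of the Green--Voisin proof itself. Your three steps are the right ones: spreading a cycle on a general $X$ over a dominant cover of the space of hypersurfaces to get a normal function; the residue/Jacobian-ring identification $H^{3-a,a}_{\mathrm{prim}}(X)\cong R^{(a+1)d-5}$, Macaulay duality in socle degree $5(d-2)$, and the Koszul-type vanishing that kills the infinitesimal invariant exactly for $d\geq 6$ (with the quintic correctly flagged as the failing borderline case); and the passage from vanishing infinitesimal invariant to a locally flat lift, finished off by irreducibility and nontriviality of the monodromy on $H^3(X,\mathbb{Q})$. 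The one place where your sketch leans on an unproved assertion is step (iii): the implication ``$\delta\nu\equiv 0$ $\Rightarrow$ $\nu$ locally flat'' is not automatic from the first invariant alone, and your parenthetical appeal to ``accompanying higher obstructions'' is precisely the point that Green and Voisin handle carefully; as a citation-level sketch this is acceptable, but as a self-contained proof it is a gap. It is worth noting that the multiplication map $W\otimes R^{d-5}\to R^{2d-5}$ at the heart of the paper's proof of Theorem \ref{Theorem: main} is the same Jacobian-ring mechanism as your step (ii), so the paper can be read as isolating the algebraic core of the argument you describe while discarding the normal-function and monodromy machinery.
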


In this note, we prove and generalize an observation of Green-Griffiths (Lemma \ref{lemma: GG}) on the infinitesimal form of Abel-Jacobi map, see Corollary \ref{corollary: inf'l AJ}. Moreover, we prove that the infinitesimal form of Conjecture \ref{conjecture: GH} is true, see Theorem \ref{Theorem: main}. 

\textbf{Notation.}
In Conjecture \ref{conjecture: GH}, {\it  general} means {\it outside a countable union of proper subvarieties}. In Theorem \ref{Theorem: main}, {\it  general} means {\it all the coefficients of the defining equation of $X$ are algebraically independent}.

\section{Main results}
Let $X$ be a smooth projective variety over $\mathbb{C}$. For each positive integer $p$,
 Green and Griffiths  \cite{GG-2} study the tangent space to the cycle group $Z^{p}(X)$. In the last chapter of  \cite{GG-2}, among other open questions, they ask
\begin{question} [(10.3) on page 186  \cite{GG-2}] \label{question: GG}
Can one define the Bloch-Gersten-Quillen sequence $\mathcal{G}_j$ on infinitesimal neighborhoods $X_j$ of $X$
so that
\[   \label{equGGquestion}
 ker(\mathcal{G}_1 \to \mathcal{G}_0) =  \underline{\underline{T}}\mathcal{G}_0?
\]
\end{question}
Let's briefly explain the notations of this question.  $X_{j}=X \times_{\mathbb{C}} \mathrm{Spec}(\mathbb{C}[t]/(t^{j+1}))$ denotes the $j$-th order infinitesimal thickening of a smooth projective variety $X$ over $\mathbb{C}$, and $\mathcal{G}_0$ denotes the ordinary Bloch-Gersten-Quillen resolution, i.e., the flasque resolution of the $K$-theory sheaf  $K_{p}(O_{X})$ for some $p$.  $\underline{\underline{T}}\mathcal{G}_0$ denotes the ``tangent sequence" of $\mathcal{G}_0$,  which is the Cousin resolution of  absolute differentials.

To give a concrete example, to study the first order deformation of 0-cycles on a surface $X$, let $X_{1}=X \times \mathrm{Spec}(\mathbb{C}[t]/(t^2))$, Green and Griffiths point out that $\mathcal{G}_1$ should be something like \footnote{See (10.5) on page 186  \cite{GG-2}.}
\begin{equation}
0 \to K_{2}(O_{X_{1}}) \to K_{2}(\mathbb{C}(X_{1})) \to \bigoplus_{Y_{1}}\mathbb{C}(Y_{1}) \to \bigoplus_{x_{1}}\mathbb{Z}_{x_{1}} \to 0,
\end{equation}
which gives a flasque resolution of  $K_{2}(O_{X_{1}})$.

Question \ref{question: GG} has been solved in \cite{DHY}. In particular, (2.1) has the form
\begin{align*}
(2.2) \  \ 0 \to & K_{2}(O_{X_{1}}) \to K_{2}(\mathbb{C}(X_{1}))  \to 
 \bigoplus_{y \in X^{(1)}_{1}}K_{1}(O_{X_{1},y} \ \mathrm{on} \ y ) \\
 & \to \bigoplus_{x \in X_{1}^{(2)}}K_{0}(O_{X_{1},x} \ \mathrm{on} \ x) \to 0,
\end{align*}
where K-groups are Thomason-Trobaugh K-groups.

To give one more example,  to study the first order deformation of 1-cycles on a smooth projective three-fold $X$, $\mathcal{G}_1$ has the form
{\small
\begin{align*}
(2.3)  \  \  0 \to K_{2}(O_{X_{1}}) & \to K_{2}(\mathbb{C}(X_{1})) \to \bigoplus_{z \in X^{(1)}_{1}}K_{1}(O_{X_{1},z} \ \mathrm{on} \ z )  \\
& \to \bigoplus_{y \in X_{1}^{(2)}}K_{0}(O_{X_{1},y} \ \mathrm{on} \ y) 
 \to \bigoplus_{x \in X_{1}^{(3)}}K_{-1}(O_{X_{1},x} \ \mathrm{on} \ x) \to 0.
\end{align*}

Green and Griffiths observe that 
\begin{lemma}  [page 189(line 21-27) of  \cite{GG-2}] \label{lemma: GG}
For $X$ a smooth projective three-fold over $\mathbb{C}$, the infinitesimal form of the Abel-Jacobi map
\[
\psi: CH_{hom}^{2}(X) \to J^{2}(X),
\]
 is given by 
\[
\delta \psi: H^{2}(\Omega_{X/ \mathbb{Q}}^{1}) \to H^{2}(\Omega_{X/ \mathbb{C}}^{1}).
\]
\end{lemma}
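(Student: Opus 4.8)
The plan is to realise both the source and the target of the infinitesimal Abel--Jacobi map as sheaf cohomology on $X$ and to identify $\delta\psi$ with the canonical ``forget the arithmetic structure'' map between them. First I would pin down the source. By Bloch's formula $CH^{2}(X)=H^{2}(X,\mathcal{K}_{2,X})$, where $\mathcal{K}_{2,X}$ is resolved by the Bloch--Gersten--Quillen complex $\mathcal{G}_{0}$. The solution of Question \ref{question: GG} recalled above (see \cite{DHY}) --- formula (2.3) and its tangent sequence $\underline{\underline{T}}\mathcal{G}_{0}$, the Cousin resolution of $\Omega^{1}_{X/\mathbb{Q}}$ --- shows that the tangent sheaf of $\mathcal{K}_{2,X}$, i.e.\ the relative $K$-sheaf attached to $\mathbb{C}[t]/(t^{2})\supset(t)$, is $\Omega^{1}_{X/\mathbb{Q}}$; concretely this rests on the Bloch--Van der Kallen isomorphism $K_{2}(R[\epsilon],(\epsilon))\cong\Omega^{1}_{R/\mathbb{Q}}$, $\{1+\epsilon f,g\}\mapsto f\,dg/g$. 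Taking $H^{2}$ shows that the formal tangent space to $CH^{2}(X)$ is $H^{2}(X,\Omega^{1}_{X/\mathbb{Q}})$, and the same holds for $CH^{2}_{hom}(X)$ because $H^{4}(X,\mathbb{Z})$ is discrete, so a first-order deformation of a cycle stays homologically trivial.

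Next I would pin down the target. The intermediate Jacobian $J^{2}(X)=H^{3}(X,\mathbb{C})/(F^{2}H^{3}(X,\mathbb{C})+H^{3}(X,\mathbb{Z}))$ is a complex torus, so its tangent space is $H^{3}(X,\mathbb{C})/F^{2}H^{3}(X,\mathbb{C})$. Degeneration of the Hodge--de Rham spectral sequence identifies this with $\mathbb{H}^{3}(X,[\mathcal{O}_{X}\to\Omega^{1}_{X/\mathbb{C}}])$, which on a three-fold fits in a short exact sequence $0\to H^{3}(X,\mathcal{O}_{X})\to TJ^{2}(X)\to H^{2}(X,\Omega^{1}_{X/\mathbb{C}})\to 0$. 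When $H^{3}(X,\mathcal{O}_{X})=0$ --- for instance for a general hypersurface in $\mathbb{P}^{4}_{\mathbb{C}}$, the setting of Conjecture \ref{conjecture: GH} --- the tangent space is exactly $H^{2}(X,\Omega^{1}_{X/\mathbb{C}})$; in general one restricts to this graded piece, which carries the transcendental part of $TJ^{2}(X)$.

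The heart of the proof is the comparison. The short exact sequence of K\"ahler differentials $0\to\mathcal{O}_{X}\otimes_{\mathbb{C}}\Omega^{1}_{\mathbb{C}/\mathbb{Q}}\to\Omega^{1}_{X/\mathbb{Q}}\to\Omega^{1}_{X/\mathbb{C}}\to 0$, equivalently base change along $\mathbb{Q}\hookrightarrow\mathbb{C}$ of the associated Cousin resolutions, induces a natural map $H^{2}(X,\Omega^{1}_{X/\mathbb{Q}})\to H^{2}(X,\Omega^{1}_{X/\mathbb{C}})$, and I claim it is $\delta\psi$. For this I would use that $\psi$ is the restriction to $CH^{2}_{hom}(X)$ of the Beilinson/Deligne regulator $r_{\mathcal{D}}\colon CH^{2}(X)=H^{2}(X,\mathcal{K}_{2,X})\to H^{4}_{\mathcal{D}}(X,\mathbb{Z}(2))$, with $J^{2}(X)$ the identity component of the target, and that $r_{\mathcal{D}}$ is induced by a morphism $\mathcal{K}_{2,X}\to\mathbb{Z}(2)_{\mathcal{D}}[2]$ in the derived category, where $\mathbb{Z}(2)_{\mathcal{D}}=[\mathbb{Z}(2)\to\mathcal{O}_{X}\to\Omega^{1}_{X/\mathbb{C}}]$, represented on symbols by the classical $d\log$ formulas. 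Passing to tangent sheaves --- deform $X$ to $X\times\mathrm{Spec}\,\mathbb{C}[\epsilon]/(\epsilon^{2})$ and take relative parts --- the constant sheaf $\mathbb{Z}(2)$ contributes nothing, and a stalk computation at the generic point of a curve in $X$ shows that the regulator carries $\{1+\epsilon f,g\}$ to the class of $f\,dg/g$ now read in $\Omega^{1}_{X/\mathbb{C}}$; in other words the induced map of tangent sheaves is exactly the canonical projection $\Omega^{1}_{X/\mathbb{Q}}\twoheadrightarrow\Omega^{1}_{X/\mathbb{C}}$. Applying $H^{2}$ and the two identifications above yields that $\delta\psi$ is this canonical map. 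The same argument applies to $CH^{p}$ on an arbitrary smooth projective $X$, giving the generalization of Corollary \ref{corollary: inf'l AJ}.

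The main obstacle is precisely this comparison: one must reconcile the $K$-theoretic description of the tangent space --- where the passage from $\Omega^{1}_{X/\mathbb{C}}$ to the absolute differentials $\Omega^{1}_{X/\mathbb{Q}}$ is forced by Bloch--Van der Kallen --- with the transcendental, period-integral description of $TJ^{2}(X)$, and verify that the regulator intertwines them with the naive base-change map; equivalently, that the infinitesimal $d\log$ on $K_{2}$ computes the derivative of the Abel--Jacobi period integral. Granting this compatibility of cycle-class maps, the remainder is bookkeeping with the Hodge--de Rham spectral sequence and the exact sequence of K\"ahler differentials.
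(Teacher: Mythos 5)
Your proposal is correct and follows essentially the same route as the paper: both reduce the Abel--Jacobi map to the Deligne cycle class map using discreteness of $Hg^{2}(X)$ and of $CH^{2}(X)/CH^{2}_{hom}(X)$, and then identify the tangent map with the natural map $H^{2}(\Omega^{1}_{X/\mathbb{Q}}) \to H^{2}(\Omega^{1}_{X/\mathbb{C}})$; the only real difference is that you sketch the K-theoretic/$d\log$ computation of the infinitesimal regulator, which the paper outsources to Theorem 2.11 of \cite{Yang}. One small slip: the Hodge-filtration sequence should read $0 \to H^{2}(\Omega^{1}_{X/\mathbb{C}}) \to TJ^{2}(X) \to H^{3}(\mathcal{O}_{X}) \to 0$, with $H^{2}(\Omega^{1}_{X/\mathbb{C}})$ the \emph{sub}object rather than the quotient --- which is in fact what your argument needs, since the infinitesimal regulator (being given by $\{1+\epsilon f, g\}\mapsto f\,dg/g$, landing in the top term $\Omega^{1}_{X/\mathbb{C}}$ of the Deligne complex) then factors through this subspace rather than merely a graded quotient.
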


Green and Griffiths use the sequence (2.1) (or (2.2)), which is to study the first order deformation of 0-cycles on a surface, to derive this Lemma. To study the first order deformation of 1-cycles on a three-fold, one should use the sequence (2.3), though both the sequence (2.1) and (2.3) study codimension 2 cycles. From the K-theoretic viewpoint, there is no negative K-groups in the sequence (2.1) (or (2.2)) because of dimensional reason, while the negative K-group $K_{-1}(O_{X_{1},x} \ \mathrm{on} \ x) $ appears in the sequence (2.3).

We will need a generalization of Lemma \ref{lemma: GG} in what follows. For this purpose, we recall basic properties about Deligne cohomology, and its connection to the Abel-Jacobi map.

Let $X$ be a smooth projective variety over $\mathbb{C}$, for each positive integer $p$,
there exists the commutative diagram:
\[
  \begin{CD}
    0  @>>> CH^{p}_{hom}(X) @>>>  CH^{p}(X) @>>> \dfrac{CH^{p}(X)}{CH^{p}_{hom}(X)} @>>> 0 \\
     @.  @V \psi VV  @V r VV   @V cl VV   @. \\
     0  @>>> J^{p}(X) @>>> H_{\mathcal{D}}^{2p}(X, \mathbb{Z}(p))  @>>> Hg^{p}(X) @>>> 0 . \\
  \end{CD}
\]
We briefly explain the terminologies in this diagram,
\begin{itemize}
\item $J^{p}(X) $ is Griffiths intermediate Jacobian and $\psi$ is the Abel-Jacobi map.  

\item $H_{\mathcal{D}}^{2p}(X, \mathbb{Z}(p))$ is Deligne cohomology and $r$ is the Deligne cycle class map. For the construction of $r$, we refer to \cite{El-Z, EV, Jannsen}.

\item $Hg^{p}(X)$ is the Hodge group, defined as $f^{-1}(H^{p,p}(X))$, where $f: H^{2p}(X, \mathbb{Z}(p))  \to H^{2p}(X, \mathbb{C})$ is induced by the inclusion $\mathbb{Z} \to \mathbb{C}$. The map $cl$ is the cycle class map, see \cite{L} for a survey.

\end{itemize}

 Since the Hodge group $Hg^{p}(X)$ ($\subseteq H^{2p}(X, \mathbb{Z}(p))$ is discrete, so $J^{p}(X) $ and $H_{\mathcal{D}}^{2p}(X, \mathbb{Z}(p)) $ have the same tangent space. Similarly, $\dfrac{CH^{p}(X)}{CH^{p}_{hom}(X)} $ ($\subseteq H^{2p}(X, \mathbb{Z})$) is discrete so that $CH^{p}(X)$ and $CH^{p}_{hom}(X)$ have the same tangent space. So we have,
 \begin{theorem}[ known to experts, e.g., see \cite{Voisin} (line 9-11 on page 707) ] \label{theorem: AJagreeDel}
Let $X$ be a smooth projective variety over $\mathbb{C}$, for each positive integer $p$, the infinitesimal form of the Abel-Jacobi map,
\[
\psi: CH_{hom}^{p}(X) \to J^{p}(X),
\]
agrees with that of the  Deligne cycles class map, 
\[
r: CH^{p}(X) \to H_{\mathcal{D}}^{2p}(X, \mathbb{Z}(p)).
\]
\end{theorem}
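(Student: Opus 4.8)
The plan is to deduce the statement directly from the commutative diagram displayed just above, by taking tangent spaces. The key point is that the horizontal rows of the diagram are short exact sequences of abelian groups, and the two right-hand terms, $\dfrac{CH^{p}(X)}{CH^{p}_{hom}(X)} \subseteq H^{2p}(X,\mathbb{Z})$ and $Hg^{p}(X) \subseteq H^{2p}(X,\mathbb{Z}(p))$, are discrete — they sit inside finitely generated abelian groups, so their formal/infinitesimal tangent spaces vanish. Concretely, I would first make precise what ``infinitesimal form'' or ``tangent space'' means here: for a functor $F$ on Artin local $\mathbb{C}$-algebras (or, in the Green–Griffiths formalism, via the $\underline{\underline{T}}$ construction applied to the relevant flasque resolution), $T F := \ker\bigl(F(\mathbb{C}[t]/(t^2)) \to F(\mathbb{C})\bigr)$. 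With this definition, $T$ applied to a constant (discrete) functor is $0$.

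Next I would argue that the functor $d\mapsto CH^{p}(X_d)$ restricted to infinitesimal thickenings $X_d = X\times\operatorname{Spec}\mathbb{C}[t]/(t^{d+1})$ has the same tangent space as $CH^{p}_{hom}$, because the quotient $CH^{p}/CH^{p}_{hom}$ is discrete: deforming a cycle infinitesimally cannot change its cohomology class, so the first-order deformation lives in $CH^{p}_{hom}$. This gives $T(CH^{p}_{hom}(X)) = T(CH^{p}(X))$. Symmetrically, on the target, $J^{p}(X)$ and $H^{2p}_{\mathcal{D}}(X,\mathbb{Z}(p))$ differ by the discrete group $Hg^{p}(X)$, so $T(J^{p}(X)) = T(H^{2p}_{\mathcal{D}}(X,\mathbb{Z}(p)))$; this is exactly the remark attributed to Voisin. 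Then the commutativity of the left square $\psi\circ(\text{incl}) = (\text{incl})\circ r$ — equivalently the outer rectangle involving $r$ and $\psi$ — shows that after applying $T$, the induced maps $\delta\psi$ and $\delta r$ are identified under the canonical isomorphisms $T(CH^{p}_{hom}(X))\cong T(CH^{p}(X))$ and $T(J^{p}(X))\cong T(H^{2p}_{\mathcal{D}}(X,\mathbb{Z}(p)))$.

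The one genuinely delicate point — and the step I expect to be the main obstacle — is justifying that ``taking tangent spaces'' is an exact (or at least left-exact) operation on the relevant short exact sequences, and that it really does commute with the maps in the diagram in the sense required. In the Green–Griffiths setup this is not a formal triviality: the tangent space is computed via the Cousin/flasque resolution $\underline{\underline{T}}\mathcal{G}_0$ of absolute Kähler differentials, and one must check that the construction $\mathcal{G}_0 \rightsquigarrow \underline{\underline{T}}\mathcal{G}_0$ is compatible with the maps $\psi$, $r$ and the inclusions of the diagram — i.e. that the Deligne cycle class map and the Abel–Jacobi map have compatible ``derivatives'' at the level of these resolutions. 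I would handle this by invoking the explicit identification of the tangent spaces with the Hodge-cohomology groups $H^{p-1}(X,\Omega^{p}_{X/\mathbb{Q}}) \to H^{p-1}(X,\Omega^{p}_{X/\mathbb{C}})$ (the case $p=2$, $\dim X = 3$ being Lemma \ref{lemma: GG}), so that the coincidence of $\delta\psi$ and $\delta r$ becomes the statement that both are computed by the same map on Hodge cohomology induced by $\Omega^{p}_{X/\mathbb{Q}}\to\Omega^{p}_{X/\mathbb{C}}$. Once that identification is in hand for both maps — which for $r$ follows from the known description of the tangent to the Deligne cycle class map, and for $\psi$ from Griffiths' infinitesimal computation of the Abel–Jacobi map — the theorem follows. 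I would close by noting that the argument is uniform in $p$ and in $\dim X$, so no three-fold hypothesis is needed here; the restriction to three-folds in Lemma \ref{lemma: GG} is only a feature of the particular resolution (2.3) used there.
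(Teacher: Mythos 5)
Your proposal is correct and follows essentially the same route as the paper: the paper's entire argument is the observation, stated in the paragraph preceding the theorem, that $Hg^{p}(X)\subseteq H^{2p}(X,\mathbb{Z}(p))$ and $CH^{p}(X)/CH^{p}_{hom}(X)\subseteq H^{2p}(X,\mathbb{Z})$ are discrete, so the two terms in each row of the commutative diagram have the same tangent space and the infinitesimal forms of $\psi$ and $r$ are identified. The extra care you take about exactness of the tangent-space construction and compatibility with the Green--Griffiths resolutions goes beyond what the paper records, which treats these points as immediate.
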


\begin{theorem} [Theorem 2.11 \cite{Yang}]
Let $X$ be a smooth projective variety over $\mathbb{C}$, for each positive integer $p$, the infinitesimal form of  the Deligne cycle class map
\[
r: CH^{p}(X) \to H_{\mathcal{D}}^{2p}(X, \mathbb{Z}(p)),
\]
is given by
\[
\delta r: H^{p}(\Omega_{X/ \mathbb{Q}}^{p-1}) \to H^{p}(\Omega_{X/ \mathbb{C}}^{p-1}),
\]
where $\delta r$ is induced by the natural map $\Omega_{X/ \mathbb{Q}}^{p-1} \to \Omega_{X/ \mathbb{C}}^{p-1}$.
\end{theorem}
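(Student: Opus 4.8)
The plan is to compute the derivative of $r$ by passing to the first-order thickening $X_{1}=X\times_{\mathbb{C}}\operatorname{Spec}\mathbb{C}[t]/(t^{2})$, realizing both $CH^{p}$ and $H^{2p}_{\mathcal{D}}(-,\mathbb{Z}(p))$ by flasque (Cousin/Gersten-type) complexes carrying the restriction maps associated to $X\hookrightarrow X_{1}$, and identifying the induced map on the kernels of these restrictions — the ``tangent complexes'' — with the canonical projection $\Omega^{p-1}_{X/\mathbb{Q}}\to\Omega^{p-1}_{X/\mathbb{C}}$ of absolute onto relative differentials. For the source: by Bloch's formula $CH^{p}(X)\cong H^{p}(X,\mathcal{K}_{p}(\mathcal{O}_{X}))$, computed by the Bloch--Gersten--Quillen resolution $\mathcal{G}_{0}$; over $X_{1}$ one has the flasque resolution $\mathcal{G}_{1}$ of $\mathcal{K}_{p}(\mathcal{O}_{X_{1}})$ (the shape (2.2), resp.\ (2.3) in dimension $3$), together with $\mathcal{G}_{1}\to\mathcal{G}_{0}$; by the solution of Question~\ref{question: GG} in \cite{DHY}, $\ker(\mathcal{G}_{1}\to\mathcal{G}_{0})=\underline{\underline{T}}\mathcal{G}_{0}$ is the Cousin resolution of $\Omega^{p-1}_{X/\mathbb{Q}}$, the underlying input being Bloch's computation of the relative $K$-theory of a square-zero extension (the part of $\mathcal{K}_{p}(\mathcal{O}_{X_{1}},(t))$ feeding the tangent sequence is $\Omega^{p-1}_{X/\mathbb{Z}}=\Omega^{p-1}_{X/\mathbb{Q}}$), with the two Gersten resolutions matched termwise by residue/symbol maps. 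Taking $H^{p}$ gives $T\,CH^{p}(X):=\ker\!\big(CH^{p}(X_{1})\to CH^{p}(X)\big)=H^{p}(X,\Omega^{p-1}_{X/\mathbb{Q}})$, where $CH^{p}(X_{1})$ denotes the hypercohomology of $\mathcal{G}_{1}$, the \emph{infinitesimal} Chow group (not the homotopy-invariant one, which is just $CH^{p}(X)$).

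For the target, I would use that the Deligne cycle class map is realized (El Zein, Esnault--Viehweg, Jannsen \cite{El-Z, EV, Jannsen}) at the level of Cousin/Gersten resolutions by a morphism from a resolution of $\mathcal{K}_{p}(\mathcal{O}_{X})$ to one of the Deligne complex $\mathbb{Z}(p)_{\mathcal{D}}=[\,\mathbb{Z}(p)\to\mathcal{O}_{X}\to\Omega^{1}_{X}\to\cdots\to\Omega^{p-1}_{X}\,]$, built from $\operatorname{dlog}$ of $K$-theory symbols. Carrying this out over $X_{1}$ and taking the kernel of restriction to $X$: since the Hodge group $Hg^{p}(X)$ — the discrete part of $H^{2p}_{\mathcal{D}}(X,\mathbb{Z}(p))$ — does not deform, the kernel computes $T\,H^{2p}_{\mathcal{D}}(X,\mathbb{Z}(p))=T\,J^{p}(X)=H^{2p-1}(X,\mathbb{C})/F^{p}$, and because the class-map construction produces $(p-1)$-forms the relevant tangent complex is the Cousin resolution of $\Omega^{p-1}_{X/\mathbb{C}}$, i.e.\ $\delta r$ lands in the summand $H^{p}(X,\Omega^{p-1}_{X})=\operatorname{gr}^{p-1}_{F}H^{2p-1}(X,\mathbb{C})$ of $T\,J^{p}(X)$. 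Evaluated on the tangent complexes, the morphism defining $r$ is on each term the canonical surjection $\Omega^{p-1}_{\mathbb{C}(Y)/\mathbb{Q}}\to\Omega^{p-1}_{\mathbb{C}(Y)/\mathbb{C}}$ (killing the ``arithmetic'' differentials coming from $\Omega^{\bullet}_{\mathbb{C}/\mathbb{Q}}$), so on $H^{p}$ it induces exactly $\delta r\colon H^{p}(X,\Omega^{p-1}_{X/\mathbb{Q}})\to H^{p}(X,\Omega^{p-1}_{X/\mathbb{C}})$ induced by $\Omega^{p-1}_{X/\mathbb{Q}}\to\Omega^{p-1}_{X/\mathbb{C}}$. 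Together with Theorem~\ref{theorem: AJagreeDel}, the case $p=2$, $\dim X=3$ recovers Lemma~\ref{lemma: GG}.

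The hypercohomology bookkeeping, once the complexes and their restrictions are set up, is routine; the real content — and the step I expect to be most delicate — is the target side: constructing the infinitesimal Deligne complex and its restriction map so that (i) its tangent complex is precisely the Cousin resolution of $\Omega^{p-1}_{X/\mathbb{C}}$ (and not a larger object involving the other Hodge summands $H^{2p-1-a}(X,\Omega^{a}_{X})$, $a<p-1$, of $T\,J^{p}(X)$), and (ii) a chosen cocycle representative of $r$ restricts, modulo $t^{2}$, to the termwise projection of absolute onto relative differentials. This forces a careful comparison of the relative $K_{p}$-symbol with the differential-form symbol $\operatorname{dlog}$, and it is in this comparison that the natural map $\Omega^{p-1}_{X/\mathbb{Q}}\to\Omega^{p-1}_{X/\mathbb{C}}$ is obliged to appear; getting the compatibility of residues along the Gersten filtration with reduction modulo $t^{2}$ exactly right is where the work lies.
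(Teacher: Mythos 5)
The paper does not actually prove this statement: it is quoted wholesale from \cite{Yang} (Theorem 2.11 there), so there is no in-paper argument to measure your proposal against. Judged on its own terms, your outline is the expected and, as far as the source side goes, the correct strategy: realize $CH^{p}$ by the Bloch--Gersten--Quillen resolution, pass to $X_{1}=X\times_{\mathbb{C}}\mathrm{Spec}\,\mathbb{C}[t]/(t^{2})$, and invoke the solution of Question \ref{question: GG} in \cite{DHY} to identify the kernel of $\mathcal{G}_{1}\to\mathcal{G}_{0}$ with the Cousin resolution of absolute differentials. Two points, however, keep this from being a proof. First, and you say so yourself, the entire content of the theorem sits in the step you defer: constructing the infinitesimal Deligne complex over $X_{1}$, checking that its tangent complex is exactly the Cousin resolution of $\Omega^{p-1}_{X/\mathbb{C}}$ rather than something involving the other Hodge summands of $T J^{p}(X)$, and verifying that the relative $K_{p}$-symbol is carried by $\mathrm{dlog}$ to the termwise projection of absolute onto relative differentials. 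Declaring that this comparison ``is obliged to appear'' is a statement of intent, not an argument; without it the theorem is not established.

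Second, a smaller but real inaccuracy: the relative $K$-theory of the split square-zero extension over a $\mathbb{Q}$-algebra is not just $\Omega^{p-1}_{/\mathbb{Q}}$ but $\Omega^{p-1}_{/\mathbb{Q}}\oplus\Omega^{p-3}_{/\mathbb{Q}}\oplus\cdots$, so the tangent object produced by the \cite{DHY} sequence (and already in the formal tangent space of \cite{GG-2}) has lower-degree summands. Your identification $T\,CH^{p}(X)=H^{p}(X,\Omega^{p-1}_{X/\mathbb{Q}})$ silently discards them. A complete proof must either track these summands and show that $\delta r$ annihilates them, or explain that the theorem is a statement about the top graded piece only; as written, the domain of the map you construct does not match the domain $H^{p}(\Omega^{p-1}_{X/\mathbb{Q}})$ asserted in the statement without an additional argument.
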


By Theorem \ref{theorem: AJagreeDel}, one has,
\begin{corollary} \label{corollary: inf'l AJ}
Let $X$ be a smooth projective variety over $\mathbb{C}$, for each positive integer $p$, the infinitesimal form of the Abel-Jacobi map,
\[
\psi: CH_{hom}^{p}(X) \to J^{p}(X)
\]
is given by
\begin{equation}
\delta r: H^{P}(\Omega_{X/ \mathbb{Q}}^{p-1}) \to H^{p}(\Omega_{X/ \mathbb{C}}^{p-1}),
\end{equation}
where $\delta r$ is induced by the natural map $\Omega_{X/ \mathbb{Q}}^{p-1} \to \Omega_{X/ \mathbb{C}}^{p-1}$.
\end{corollary}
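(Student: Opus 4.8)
The plan is to obtain Corollary~\ref{corollary: inf'l AJ} as a formal consequence of the two results placed immediately before it, namely Theorem~\ref{theorem: AJagreeDel} and Theorem~2.11 of \cite{Yang}: once these are granted, the only task is to string the two identifications together, so the work here is bookkeeping rather than new mathematics.

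\emph{Step 1 — reduce the Abel--Jacobi map to the Deligne cycle class map.} I would begin from the commutative diagram displayed above, which compares $\psi$ with the Deligne cycle class map $r$ and has discrete cokernels $CH^{p}(X)/CH^{p}_{hom}(X)\hookrightarrow H^{2p}(X,\mathbb{Z})$ and $Hg^{p}(X)\hookrightarrow H^{2p}(X,\mathbb{Z}(p))$. Applying the first-order deformation functor (base change along $\mathrm{Spec}\,\mathbb{C}\to\mathrm{Spec}\,\mathbb{C}[t]/(t^{2})$) and passing to tangent spaces at the relevant base points, the rightmost vertical arrow contributes nothing, since both its source and its target are discrete; hence the tangent map of $\psi$ is canonically identified with the tangent map of $r$. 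This identification is exactly the assertion of Theorem~\ref{theorem: AJagreeDel}, which I would simply invoke rather than reprove.

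\emph{Step 2 — compute the infinitesimal Deligne cycle class map, then compose.} By Theorem~2.11 of \cite{Yang}, the infinitesimal form of $r\colon CH^{p}(X)\to H_{\mathcal{D}}^{2p}(X,\mathbb{Z}(p))$ is the map $\delta r\colon H^{p}(\Omega_{X/\mathbb{Q}}^{p-1})\to H^{p}(\Omega_{X/\mathbb{C}}^{p-1})$ induced by the natural morphism $\Omega_{X/\mathbb{Q}}^{p-1}\to\Omega_{X/\mathbb{C}}^{p-1}$. Composing the identification of Step~1 with this description yields that the infinitesimal form of $\psi$ is precisely $\delta r$, which is the content of the corollary.

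The one point that genuinely needs care is to make sure that ``infinitesimal form'' means the \emph{same} thing in Theorem~\ref{theorem: AJagreeDel} and in Theorem~2.11 of \cite{Yang}: in both it should denote the induced map on tangent spaces for the same thickening $X_{1}=X\times_{\mathbb{C}}\mathrm{Spec}\,\mathbb{C}[t]/(t^{2})$, with the tangent space to $CH^{p}(X)$ (equivalently $CH^{p}_{hom}(X)$) computed from the Cousin resolution of absolute differentials, the ``$\underline{\underline{T}}\mathcal{G}_{0}$'' of Question~\ref{question: GG}, and the tangent space to $J^{p}(X)$ (equivalently $H_{\mathcal{D}}^{2p}(X,\mathbb{Z}(p))$) identified with $H^{p}(\Omega_{X/\mathbb{C}}^{p-1})$, the first graded piece of $H^{2p-1}(X,\mathbb{C})/F^{p}H^{2p-1}(X,\mathbb{C})$. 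Since both cited theorems are formulated with exactly these conventions, there is no real obstacle and the corollary follows at once; the only way a substantive difficulty could enter would be a demand for a self-contained argument, which would force one to redo the tangent-space computation behind Theorem~2.11 of \cite{Yang} together with the discreteness and compatibility argument behind Theorem~\ref{theorem: AJagreeDel}, and that is not needed here.
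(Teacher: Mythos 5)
Your proposal is correct and follows exactly the paper's route: the corollary is obtained by combining Theorem~\ref{theorem: AJagreeDel} (the infinitesimal form of $\psi$ agrees with that of the Deligne cycle class map $r$) with Theorem~2.11 of \cite{Yang} (which computes the infinitesimal form of $r$ as $\delta r$). The paper states this in one line; your additional remark about matching the conventions for ``infinitesimal form'' in the two cited results is a reasonable precaution but does not change the argument.
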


In particular, for $X$ a smooth projective three-fold over $\mathbb{C}$, the infinitesimal form of the Abel-Jacobi map
\[
\psi: CH_{hom}^{2}(X) \to J^{2}(X)
\]
 is given by 
\[
\delta r: H^{2}(\Omega_{X/ \mathbb{Q}}^{1}) \to H^{2}(\Omega_{X/ \mathbb{C}}^{1}).
\]
This proves Lemma \ref{lemma: GG}.

\begin{remark}
The infinitesimal form of Abel-Jacobi map (2.2) does not agree with the infinitesimal Abel-Jacobi map in \cite{Green-note}.
\end{remark}

Now, we prove that the infinitesimal form of Conjecture \ref{conjecture: GH} is true,
\begin{theorem} \label{Theorem: main}
For $X \subset \mathbb{P}_{\mathbb{C}}^{4}$ a general hypersurface of degree $d \geq 6$, the image of the infinitesimal form of the Abel-Jacobi map
\[
\delta r: H^{2}(\Omega_{X/ \mathbb{Q}}^{1}) \to H^{2}(\Omega_{X/ \mathbb{C}}^{1})
\]
 is zero.
\end{theorem}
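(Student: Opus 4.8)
The plan is to compute both cohomology groups via the natural exact sequence relating absolute and relative (geometric) differentials, and show that the map $\delta r$ is forced to be zero by a dimension/weight count specific to the case $X \subset \mathbb{P}^4_{\mathbb{C}}$ of degree $d \ge 6$. First I would fix the field of definition: since all coefficients of the defining equation of $X$ are algebraically independent over $\mathbb{Q}$, write $k_0 = \mathbb{Q}(\text{coefficients})$, a purely transcendental extension of $\mathbb{Q}$ of transcendence degree $N = \binom{d+4}{4}$ (up to the obvious scaling), and let $X_0/k_0$ be a model with $X = X_0 \times_{k_0} \mathbb{C}$. There is a short exact sequence of sheaves on $X$
\[
0 \to \Omega^1_{k_0/\mathbb{Q}} \otimes_{k_0} \mathcal{O}_X \to \Omega^1_{X/\mathbb{Q}} \to \Omega^1_{X/k_0} \to 0,
\]
and $\Omega^1_{X/k_0} = \Omega^1_{X/\mathbb{C}}$ since $\mathbb{C}/k_0$ is separable (indeed $\Omega^1_{\mathbb{C}/k_0}$ restricted appropriately vanishes at the level needed). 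So the source $H^2(\Omega^1_{X/\mathbb{Q}})$ sits in a long exact sequence with $H^2(\Omega^1_{X/\mathbb{C}})$ and $H^2(\Omega^1_{k_0/\mathbb{Q}} \otimes \mathcal{O}_X) = \Omega^1_{k_0/\mathbb{Q}} \otimes_{k_0} H^2(\mathcal{O}_X)$, and the map $\delta r$ in the statement is (up to the identification in Corollary \ref{corollary: inf'l AJ}) the natural map $H^2(\Omega^1_{X/\mathbb{Q}}) \to H^2(\Omega^1_{X/\mathbb{C}})$ induced by the surjection of sheaves above.

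The key step is then: for $X$ a smooth hypersurface of degree $d$ in $\mathbb{P}^4$, one has $H^2(X, \mathcal{O}_X) = 0$ — this is Bott vanishing / the Lefschetz hyperplane theorem, since $H^2(X,\mathcal{O}_X) \cong H^{0,2}(X)$ and $h^{0,2}(X) = 0$ for any smooth hypersurface in $\mathbb{P}^n$ with $n \ge 3$. Feeding $H^2(\mathcal{O}_X) = 0$ (hence $H^2(\Omega^1_{k_0/\mathbb{Q}} \otimes \mathcal{O}_X) = 0$) into the long exact sequence forces the map $H^2(\Omega^1_{X/\mathbb{Q}}) \to H^2(\Omega^1_{X/\mathbb{C}})$ to be \emph{injective}, which is the opposite of what we want. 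So the genuine argument must instead show that $H^2(\Omega^1_{X/\mathbb{Q}})$ itself is \emph{annihilated} before mapping over — i.e. I would look at the \emph{image} of $\delta r$ rather than injectivity, and here is where the degree hypothesis $d \ge 6$ must enter. Concretely, by Theorem 2.11 of \cite{Yang} the map $\delta r$ factors through the cup-product / contraction pairing with the Kodaira–Spencer classes of the universal family, landing in a Koszul-cohomology group of the Jacobian ring $R = \mathbb{C}[x_0,\dots,x_4]/(\partial F)$ of $X$. The composite describing $\delta r$ is the symbol map into a graded piece $R_m$ of the Jacobian ring; Griffiths' theory identifies $H^{2,1}_{\mathrm{prim}}(X) \cong R_{2d-5}$ and $H^{1,2}_{\mathrm{prim}}(X)\cong R_{d-5}$, and the relevant obstruction group is a piece in degree $\le d - 5$ paired against deformations of degree $d$. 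I would then invoke Green's Koszul-vanishing theorem (the same engine behind Theorem 0.1 of \cite{Green-JDG}): for $d \ge 6$ the corresponding Koszul cohomology group $K_{p,q}$ of $R$ vanishes, so the image of $\delta r$ is zero.

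The main obstacle is making the identification in the previous paragraph precise: I must match the abstract map $H^2(\Omega^1_{X/\mathbb{Q}}) \to H^2(\Omega^1_{X/\mathbb{C}})$ of Corollary \ref{corollary: inf'l AJ} with the explicit Koszul-theoretic incarnation of the infinitesimal Abel–Jacobi map used by Green and Voisin, and check that the transcendence-degree-$N$ field $k_0$ supplies exactly the full space $H^0(\mathbb{P}^4,\mathcal{O}(d))$ of first-order deformations of $X$ (so that no deformation directions are lost and the relevant cup-product map is the universal one). Granting that dictionary, the proof reduces to the Koszul vanishing $K_{?,?}(R) = 0$ for $d \ge 6$, which is exactly Green's computation; I expect the bookkeeping of Hodge weights / graded degrees — tracking which graded piece of the Jacobian ring the symbol of a class in $H^2(\Omega^1_{X/\mathbb{Q}})$ lands in — to be the delicate part, while the vanishing itself is quotable. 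A clean alternative, if the dictionary proves awkward, is to argue directly: $\delta r$ is compatible with the Abel–Jacobi map $\psi$ on an arithmetic model, $\psi$ on the model lands in the torsion of $J^2$ by Green–Voisin's theorem, torsion is killed by passing to the tangent space, hence $\delta r = 0$; this routes everything through Theorem 0.1 of \cite{Green-JDG} and Corollary \ref{corollary: inf'l AJ} with no new Koszul computation, at the cost of being less self-contained.
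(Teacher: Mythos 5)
You start from the correct short exact sequence, but you read off the wrong boundary term and the argument derails there. The image of $\delta r$ is governed by the term that comes \emph{after} $H^{2}(\Omega_{X/\mathbb{C}}^{1})$ in the long exact sequence, namely the connecting map $f: H^{2}(\Omega_{X/\mathbb{C}}^{1}) \to H^{3}(\Omega_{\mathbb{C}/\mathbb{Q}}^{1}\otimes_{\mathbb{C}} O_{X}) \cong \Omega_{\mathbb{C}/\mathbb{Q}}^{1}\otimes H^{3}(O_{X})$, so that $\mathrm{Im}(\delta r)=\mathrm{Ker}(f)$; the group you compute, $H^{2}(O_{X})$, sits \emph{before} $H^{2}(\Omega_{X/\mathbb{Q}}^{1})$ and only controls the injectivity of $\delta r$, which is irrelevant to the claim. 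Moreover $H^{3}(O_{X})$ is very far from zero: since $K_{X}=O_{X}(d-5)$, Serre duality and Griffiths' residue calculus give $H^{3}(O_{X})^{\ast}\cong H^{0}(K_{X})\cong R^{d-5}$, nonzero exactly when $d-5\geq 0$ (and the hypothesis $d\geq 6$ enters to make $d-5\geq 1$). The whole content of the theorem is the \emph{injectivity} of $f$, which the paper proves by dualizing: $f^{\ast}$ becomes $\Omega_{\mathbb{C}/\mathbb{Q}}^{1\,\ast}\otimes R^{d-5}\to R^{2d-5}$, factoring through the multiplication map $W\otimes R^{d-5}\to R^{2d-5}$, where $W\subseteq S^{d}$ is spanned by the derivatives $\partial F/\partial\alpha$ with respect to the coefficients; algebraic independence of the coefficients forces $W=S^{d}$, and Macaulay's theorem (the Theorem on p.~74 of \cite{Green-note}) gives surjectivity of $S^{d}\otimes R^{d-5}\to R^{2d-5}$. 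Your pivot to ``Green's Koszul vanishing'' gestures at the right circle of ideas but never identifies which multiplication map of graded pieces of the Jacobian ring must be surjective, nor why genericity supplies all of $S^{d}$; that is precisely the missing key step.

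Your fallback route is also not a proof: knowing that the image of $\psi$ lies in the torsion of $J^{2}(X)$ (Green--Voisin) does not formally imply that the Green--Griffiths infinitesimal form of $\psi$ vanishes. The ``tangent space'' here is a formal object attached to the Gersten complex on infinitesimal thickenings, not the derivative of a map of varieties, and no implication from ``torsion image'' to ``zero tangent map'' is available; if it were, the theorem would be an immediate corollary of \cite{Green-JDG} and the computation above would be unnecessary. Two further inaccuracies worth flagging: $\Omega_{X/k_{0}}^{1}\neq\Omega_{X/\mathbb{C}}^{1}$, since $\Omega_{\mathbb{C}/k_{0}}^{1}$ is a huge $\mathbb{C}$-vector space ($\mathbb{C}$ has infinite transcendence degree over $k_{0}$); and $H^{1,2}_{\mathrm{prim}}\cong R^{3d-5}$, not $R^{d-5}$ (it is $H^{3,0}$ that is $R^{d-5}$, which is what actually appears after dualizing $H^{3}(O_{X})$).
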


\begin{proof}
There is a natural short exact sequence of sheaves
\[
0 \to \Omega_{\mathbb{C}/\mathbb{Q}}^{1}\otimes_{\mathbb{C}} O_{X} \to \Omega_{X/\mathbb{Q}}^{1} \to \Omega_{X/\mathbb{C}}^{1} \to 0.
\]

The associated long exact sequence is of the form
\[
0 \to H^{0}(\Omega_{\mathbb{C}/\mathbb{Q}}^{1}\otimes_{\mathbb{C}} O_{X}) \to \cdots \to H^{2}(\Omega_{X/\mathbb{Q}}^{1}) \xrightarrow{\delta r} H^{2}(\Omega_{X/\mathbb{C}}^{1}) \xrightarrow{f} H^{3}(\Omega_{\mathbb{C}/\mathbb{Q}}^{1}\otimes_{\mathbb{C}} O_{X}) \to \cdots.
\]
So the image of  $\delta r$ 
can be identified with the kernel of $f$,
\[
\mathrm{Im}(\delta r)=\mathrm{Ker}(f).
\]

The dual of  $H^{2}(\Omega_{X/\mathbb{C}}^{1}) \xrightarrow{f} H^{3}(\Omega_{\mathbb{C}/\mathbb{Q}}^{1}\otimes_{\mathbb{C}} O_{X}) \ (\cong \Omega_{\mathbb{C}/\mathbb{Q}}^{1}\otimes H^{3}(O_{X}))$,
can be rewritten using the Poincar\'e residue representation as 
\[
\Omega_{\mathbb{C}/\mathbb{Q}}^{1 \ \ast}\otimes R^{d-5} \to R^{2d-5},
\] 
where $R^{j}$ is the Jacobian ring of the hypersurface at degree $j$ (The assumption ``$d \geq 6$" guarantees $d-5 \geq 1$).

Let $S^{d} \subset \mathbb{C}[z_{0}, z_{1}, z_{2}, z_{3}]$ denote homogeneous polynomials of degree $d$. The map $\Omega_{\mathbb{C}/\mathbb{Q}}^{1 \ \ast} \to R^{d}$ is defined as 
\[
\dfrac{\partial}{\partial \alpha}  \to \dfrac{\partial F}{\partial \alpha},
\]
where $\alpha \in \mathbb{C}$ and $F$ is the equation of the hypersurface $X$. Let $\alpha$ run through all the complex numbers, then $\dfrac{\partial F}{\partial \alpha}$ generates a subspace of  $S^{d}$, denoted $W$. 

Now, the map
\[
\Omega_{\mathbb{C}/\mathbb{Q}}^{1 \ \ast}\otimes R^{d-5} \to R^{2d-5}
\] 
can be described as a composition
\[
\Omega_{\mathbb{C}/\mathbb{Q}}^{1 \ \ast}\otimes R^{d-5} \to  W   \otimes R^{d-5}  \to R^{2d-5}, 
\]
where the right map $ W \otimes R^{d-5}  \to R^{2d-5}$ is polynomial multiplication.

Since $X$ is general, all the coefficients of $F$ are algebraically independent, 
the codimension of $W$ in $S^{d}$ is zero. By taking $p=0$ and $k=2d-5$
in the Theorem on page 74 of \cite{Green-note} 
 we see that $ W \otimes S^{d-5}  \to S^{2d-5}$ is surjective. Consequently,  
$ W \otimes R^{d-5}  \to R^{2d-5}$ is surjective because of the following commutative diagram (both of the vertical arrows are surjective),
\[
\begin{CD}
       W \otimes S^{d-5}  @>>>  S^{2d-5} \\
     @VVV   @VVV \\
    W \otimes R^{d-5}  @>>>  R^{2d-5} .
  \end{CD}
\]

So the map  $ \Omega_{\mathbb{C}/\mathbb{Q}}^{1 \ \ast}\otimes R^{d-5} \to R^{2d-5}$ is surjective. Dually, the map $H^{2}(\Omega_{X/\mathbb{C}}^{1}) \xrightarrow{f} H^{3}(\Omega_{\mathbb{C}/\mathbb{Q}}^{1}\otimes_{\mathbb{C}} O_{X})$ is injective. In conclusion, $\mathrm{Im}(\delta r)=\mathrm{Ker}(f)=0$.

\end{proof}

\textbf{Acknowledgements} 
 The author is truly grateful to Jerome Hoffman for discussions and comments that improve this note a lot. This work is partially supported by the Fundamental Research Funds for the Central Universities.

\end{document}